\title{On exponentially accurate approximation of a near the identity map by an autonomous flow}
\author{V.~Gelfreich$^1$ and A.~Vieiro$^{2,3}$\\[4pt]
$^1$ \small Mathematics Institute, University of Warwick, \small Coventry CV4 7AL, UK\\
\small{\tt v.gelfreich@warwick.ac.uk}\\[4pt]
$^2$ \small Departament de Matem\`atiques i Inform\`atica,\\ \small Universitat de Barcelona, Gran Via 585, 08007 Barcelona, Spain\\
$^3$ \small Centre de Recerca Matem\`atica (CRM),\\ \small Campus Bellaterra, 08193 Bellaterra, Spain\\
\small{\tt vieiro@maia.ub.es}
}
\providecommand{\keywords}[1]
{
  \small	
  \textbf{\textit{Keywords:}} #1
}
\newcommand{\N}{\mathbb{N}}
\newcommand{\R}{\mathbb{R}}
\newcommand{\C}{\mathbb{C}}
\newtheorem{thm}{Theorem}
\newtheorem{remark}[thm]{Remark}
\begin{document}

\maketitle	
\begin{abstract}
This paper contains a proof of a refined version of Neishtadt's theorem which states
that an analytic near-identity map can be approximated by the time-one map of
an autonomous flow with exponential accuracy. We  provide explicit 
expressions for the vector fields and give explicit bounds for the error terms.
\end{abstract}

\keywords{near-identity maps, discrete averaging, embedding of a map into  flow}

In the  perturbation theory of dynamical systems, there are two parallel and almost equivalent directions: 
one studies rapidly oscillating vector fields, while the second one deals with near-identity maps. 
The theory for flows possesses a powerful collection of analytic tools based on averaging
\cite{BM1961,Verhulst2023}. On the other hand,  numerical studies and visualization  of dynamics
are often much easier to perform using maps.

In 1984 Neishtadt \cite{Neishtadt84} published a theorem which states that a tangent to the identity analytic family of maps can be embedded into a family of autonomous flows up to an exponentially small error. 
The results of that paper found a substantial number of applications in Ordinary Differential Equations and Dynamical Systems.
Neishtadt's proof is based on the classical averaging
for rapidly oscillating time-periodic flows and does not provide
explicit expressions for the vector fields in terms of the original map. 
Therefore, checking the accuracy of an approximation for an individual map becomes
difficult and finding an expression for the vector field impossible from the practical point of view.

In our paper we present a refined version of  Neishtadt's theorem which overcomes these limitations:
 we provide an explicit expression for the vector fields and establish explicit bounds for the error terms.
In contrast to original Neishtadt's theorem,  our version makes a statement about individual maps and the approximation error is explicitly controlled 
by the
ratio  $\delta/\varepsilon$, where
$\varepsilon$ characterises the distance to the identity in a complex
$\delta$-neighbourhood of the domain of the map.

Our proof is based on the  development of discrete averaging method
 \cite{GelfreichV18} and substantially simplifies analysis of dynamics by eliminating the need of embedding the map into a non-autonomous flow. As an additional bonus, the explicit nature of our construction opens potential for developing new numerical methods based on the discrete averaging for maps.

We consider an analytic (or real-analytic) map $f:D_0\to\C^n$ defined on a subset 
 $D_0\subset \mathbb C^n$ (or $\R^n$). We suppose that there is $\delta>0$
 such that the analytic continuation of $f$ 
 onto $D_{\delta}$, a complex  $\delta$-neighbourhood of $D_0$, is close to the identity
 map~$\xi$ and define
\begin{equation}
\varepsilon=\|f-\xi\|_{D_{\delta}}.
\end{equation}
We use the infinity norm for vectors
and supremum norms for functions. 
Let $m\in\N$ and define an interpolating vector field of order $m$,
\begin{equation}\label{Eq:interpol_VF}
	X_m(x)=\sum_{k=1}^m\frac{(-1)^{k-1}}{k}\Delta_k(x),
\end{equation}
where the finite differences are defined recursively
\begin{equation}\label{Eq:finite-diff}
	\Delta_0(x)=x,
	\qquad 
	\Delta_{k}(x)=\Delta_{k-1}(f(x))-\Delta_{k-1}(x)
	\quad\text{for  $k\ge 1$}.
\end{equation}
We  say that $X_m$ is obtained with the help of  {\em discrete averaging\/}
as $X_m$ is a weighted sum of  the map iterates $f^k(x)$ for $0\le k\le m$.
Indeed, it is not too difficult to check that 
\[
	\Delta_k(x)=\sum_{i=0}^k \binom{k}{i}(-1)^{k-i} f^i(x).
\]
Let $\Phi^t_X$ denote the flow defined by the differential equation $\dot x=X(x)$.

\goodbreak

\begin{thm}\label{Thm:alaNeishtadt}
If a map $f$ is analytic in $D_\delta$ and  $\varepsilon/\delta \le 1/6e$,
then 
  the interpolating vector field $X_m$
of order $2\le m\le M_\varepsilon+1$, 
where $M_\varepsilon=\frac{\delta}{6\mathrm e\varepsilon}$, is analytic in $D_{\delta/3}$,   $\|X_m\|_{D_{\delta/3}}\le 2\varepsilon$ and
\begin{equation}\label{Eq:Phi-f}
    \|\Phi^1_{X_m}-f \|_{D_0} 
  \le   
    3\varepsilon  \left( \frac{6 (m-1)\varepsilon}{\delta}\right)^{m} .
\end{equation}
Moreover, for $\displaystyle m = \left\lfloor M_\varepsilon
\right\rfloor+1$
\begin{equation}
\|\Phi^1_{X_{\mathrm{m}}}-f \|_{D_0}\le 
3\,\varepsilon \exp\left(- \delta /6\mathrm e\varepsilon\right).
\end{equation}
\end{thm}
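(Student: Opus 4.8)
The plan is to prove the approximation bound by comparing two autonomous flows through a Gronwall-type argument, after establishing good analytic estimates on the finite differences $\Delta_k$. First I would bound $\|\Delta_k\|$ on nested complex neighbourhoods. Since $\Delta_k(x)$ is built from $\Delta_{k-1}$ by the increment $\Delta_{k-1}(f(x)) - \Delta_{k-1}(x)$, and $f$ moves points by at most $\varepsilon$, a Cauchy estimate on a shrinking polydisc gives something like $\|\Delta_k\|_{D_{\delta_k}} \le C \varepsilon (k/\delta)^{k-1}\cdot(\text{const})$ — the point being that each step costs a derivative, hence a factor $\sim k/\delta$, producing the combinatorial growth $\sim k!/\delta^{k}$ that is exactly tamed by the radius-of-convergence threshold $\varepsilon/\delta \le 1/6e$. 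Summing the series $X_m = \sum_{k=1}^m \frac{(-1)^{k-1}}{k}\Delta_k$ on $D_{\delta/3}$ then yields analyticity there and the bound $\|X_m\|_{D_{\delta/3}} \le 2\varepsilon$, with the geometric tail controlled by $6(m-1)\varepsilon/\delta \le 1$.

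**The core comparison**

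The heart of the argument is to show that $\Phi^1_{X_m}$ is close to $f$. The natural approach: $X_m$ is designed so that its time-one flow matches $f$ to order $m$ in $\varepsilon$. I would make this precise by observing that the "interpolating vector field" construction is the discrete analogue of a logarithm — formally $X = \Log(f)$ and $\Phi^1_X = \Exp(\Log f) = f$ if the series converged; truncating at order $m$ leaves a remainder of size $\sim \|\Delta_{m+1}\|$. Concretely, I would introduce $g = \Phi^1_{X_m}$, note that $g$ is near-identity with $\|g - \xi\|$ comparable to $\varepsilon$ on a slightly smaller domain, and compare the two maps by writing $\Phi^1_{X_m} - f$ as an integral of the difference of the associated flows, or more directly by estimating the discrepancy in the formal conjugacy/Taylor expansion. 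The truncation error in $X_m$ versus the "true" generator is $\frac{1}{m+1}\|\Delta_{m+1}\| \lesssim \varepsilon (6m\varepsilon/\delta)^m$, and this propagates linearly (via Gronwall on the flow, using $\|X_m\| \le 2\varepsilon \le \delta/3e$ to keep orbits inside $D_{\delta/3}$ for time one) to give $\|\Phi^1_{X_m} - f\|_{D_0} \le 3\varepsilon(6(m-1)\varepsilon/\delta)^m$.

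**The optimized choice and the obstacle**

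The final exponential statement is then a calculus exercise: substituting $m = \lfloor M_\varepsilon \rfloor + 1$ with $M_\varepsilon = \delta/6e\varepsilon$ makes $6(m-1)\varepsilon/\delta \le 6 M_\varepsilon \varepsilon/\delta = 1/e$, so the bound becomes $3\varepsilon e^{-m} \le 3\varepsilon e^{-M_\varepsilon} = 3\varepsilon\exp(-\delta/6e\varepsilon)$, using $m \ge M_\varepsilon$. I expect the main obstacle to be the finite-difference estimate itself: getting the constants sharp enough that the threshold comes out as the clean $1/6e$ rather than something worse. This requires carefully choosing the sequence of shrinking radii $\delta = \delta_0 > \delta_1 > \cdots$ on which successive $\Delta_k$ live — a geometric-type allocation leaving $\delta/3$ at the end — and applying the Cauchy inequality $\|Dh\|_{D_{\rho - s}} \le s^{-1}\|h\|_{D_\rho}$ with the $s_k$ summing appropriately, then optimizing. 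The flow-comparison step is standard once $X_m$ is controlled, so the combinatorial bookkeeping in the analytic estimates is where the real work lies; a secondary subtlety is verifying that the time-one flow of $X_m$ genuinely stays in the domain where all estimates are valid, which forces the relatively generous loss from $\delta$ down to $\delta/3$.
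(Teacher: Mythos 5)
Your preparatory estimates (Cauchy bounds on the $\Delta_k$ over shrinking neighbourhoods, summation to get analyticity of $X_m$ on $D_{\delta/3}$ and $\|X_m\|\le 2\varepsilon$, the domain control for the time-one flow, and the final optimisation in $m$) are plausible and the last calculus step is essentially the paper's. But the core comparison step has a genuine gap. You assert that the ``truncation error in $X_m$ versus the true generator is $\frac{1}{m+1}\|\Delta_{m+1}\|$'' and that this ``propagates linearly via Gronwall'' to give $\|\Phi^1_{X_m}-f\|_{D_0}\le 3\varepsilon\left(6(m-1)\varepsilon/\delta\right)^m$. There is, however, no ``true generator'': the formal logarithm series $\sum_k \frac{(-1)^{k-1}}{k}\Delta_k$ does not converge in general, and $f$ is generically not the time-one map of any autonomous flow --- that is precisely why the error is only exponentially small rather than zero. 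Gronwall compares two flows with nearby vector fields; it cannot compare $\Phi^1_{X_m}$ with the map $f$ directly, and it does not show that the discrepancy is of the size of the first omitted term: truncating the logarithm perturbs the exponential at every order above $m$, and the assertion that all contributions through order $m$ cancel between $\Phi^1_{X_m}$ and $f$ is exactly the nontrivial content of the theorem, which your outline takes for granted.

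The paper supplies two devices that your proposal is missing. First, $f$ is embedded in the complex family $f_\mu=(1-\mu)\xi+\mu f$, and the order-$m$ agreement $\Phi^1_{X_{m,\mu}}=f_\mu+O(\mu^{m+1})$ is proved via an auxiliary Lie-series vector field $Y_{m,\mu}=\sum_{k\le m}\mu^k a_k$ (constructed so that its time-one map reproduces $f_\mu$ to order $m$) together with the observation that $X_{m,\mu}$ is, up to $O(\mu^{m+1})$, the derivative at $t=0$ of the Newton interpolating polynomial through $\Phi^t_{Y_{m,\mu}}$, $t=0,\dots,m$, which is exact on polynomials of degree $m$. Second, the formal $O(\mu^{m+1})$ statement is converted into the explicit constant by the maximum modulus principle in $\mu$: the uniform bound $\|\Phi^1_{X_{m,\mu}}-f_\mu\|_{D_0}\le 3\varepsilon|\mu|$ on the disc $|\mu|\le\mu_m/4$ plus vanishing to order $m$ at $\mu=0$ yields the factor $(4/\mu_m)^m=(6(m-1)\varepsilon/\delta)^m$ at $\mu=1$. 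Without some equivalent of these two steps (an order-$m$ cancellation argument and a quantitative mechanism turning it into an explicit bound --- your route would otherwise have to fall back on embedding $f$ into a non-autonomous flow, as in Neishtadt's original proof), the estimate \eqref{Eq:Phi-f} is not established.
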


\begin{proof}
We consider the map  $f$ as a member of the family 
\[
f_\mu=(1-\mu)\xi +\mu f
\]
where $\mu$ is a complex parameter.  
Obviously the  function $f_\mu$ is analytic in the same domain $D_\delta$ as the function $f$. Then $|\Delta_1(x)|=|f_\mu(x)-x|\le |\mu|\varepsilon$
for any $x\in D_\delta$ and any $\mu$. Let $\mu_1=\delta/\varepsilon$
and 
\[
\mu_m=\frac{2\delta}{3\varepsilon (m-1)}\qquad\text{for $m\ge 1$ .}
\]
If $|\mu|\le \mu_m$, then for  every $x_0\in D_{\delta/3}$
the first  iterates $x_k:=f_\mu^k(x_0)\in D_{\delta}$ 
and $|x_{k+1}-x_k|\le |\mu|\varepsilon $ provided $0\le k\le m-1$ .
The definition \eqref{Eq:finite-diff}
implies that 
\[
    \Delta_{k}(x_0)=\sum_{j=0}^{k-1} (-1)^{k-j-1} \binom{k-1}{j}\Delta_1(x_j).
\]
Since $\sum_{j=0}^{k}\binom{k}{j}=2^{k}$ we get
\[
	\left\| \Delta_{k}\right\|_{D_{\delta/3}}
	\le  
	2^{k-1}|\mu|\varepsilon\,.
\]
Since $|\Delta_k(x)|\le \|\Delta_{k-1}'\|\, |\mu|\varepsilon$
where the supremum norm is taken over $|\mu|<\mu_k$ and $|x-x_0|<\mu_k\varepsilon$,
we can check that  $\Delta_{k}(x_0)=O(\mu^{k})$.
Applying the maximum modulus principle (MMP)%
\footnote{We  use the following simple statement of Complex Analysis:
 if a function $g$ is an analytic function of $\mu$ bounded in an open disk   $|\mu|<r$ and $g^{(k)}(0)=0$ for $k=0,1,\ldots,m$,
 then  the maximum modulus principle implies that 
 $|g(\mu )|\le (|\mu|/r)^{m} \sup_{|\mu|< r} |g(\mu )|$.
 Of course, if the function  extends continuously onto the boundary of the disk,
 the supremum can be replaced by the maximum over $|\mu|=r$.}
in $\mu$ to each component of $\Delta_{k}(x_0)$, we get 
\[
\left\|\Delta_{k}\right\|_{D_{\delta/3}}
\le
2^{k-1}\mu_{k}\varepsilon \,
\left(\frac{|\mu|}{\mu_{k}}\right)^{k} \,. 
\]

Let $X_{m,\mu}$ be defined by \eqref{Eq:interpol_VF} with $f$ replaced by $f_\mu$.
Then  $X_{m,\mu}$ is analytic in $D_{\delta/3}$ for $|\mu|\le \mu_{m}$
and admits the following upper bound
\[
\|X_{m,\mu}\|_{D_{\delta/3}}
\le\sum_{k=1}^m \frac1{k} \left\|\Delta_k\right\|_{D_{\delta/2}}
\le \frac{\varepsilon \mu_m}{2} \sum_{k=1}^m 
\left( \frac{2|\mu|}{\mu_{m}} \right)^{k}
\le 
\frac{\varepsilon|\mu|}{1-\frac{2|\mu|}{\mu_m}}\,.
\]
Then  we get that for $|\mu| \leq \mu_{m}/4$
\[
\|X_{m,\mu}\|_{D_{\delta/3}} 
\le 
 2\varepsilon|\mu|\,. 
\]
For our range of $m$ we have
 $\mu_{m}\ge4$. Then  the domain of validity of the upper bound 
includes $\mu=1$ and we get  
\[
\|X_{m}\|_{D_{\delta/2}}\le 2 \varepsilon.
\]
We  also get that for  $|\mu|\le\mu_{m}/4$ and $m\geq 2$
\[
 \|X_{m,\mu}\|_{D_{\delta/3}}\le\frac{\varepsilon\mu_{m}}{2}=\frac{\delta}{3(m-1)}\le\frac{\delta}{3}
 \;.
\]
Then the orbit of the vector field $X_{m,\mu}$ with an initial condition  in $D_0$ remains in $D_{\delta/3}$ during one unit of time and
$$
\|\Phi^1_{X_{m,\mu}}-\xi\|_{D_0}\le \|X_{m,\mu}\|_{D_{\delta/3}}\le 2\varepsilon|\mu|\,.
$$
In order to apply arguments based on the MMP, we need to check that  $\Phi^1_{X_{m,\mu}}$ 
has the same Taylor polynomial of degree $m$ in $\mu$ as the map
 $f_\mu$. Proofs of similar claims can be found in \cite{GelfreichV18,GelfreichViero2024}. 
First we define an auxiliary vector field
 \[
 Y_{m,\mu}(x)=\sum_{k=1}^m\mu^k a_k(x)
 \]
 where $a_1=f-\xi$ and $a_k$ with $k\ge 2$ are defined recursively by
 \[
 a_k=-\sum_{j=2}^k\frac{1}{j!}\sum_{i_1+\dots+i_j=k}L_{a_{i_1}}\ldots L_{a_{i_j}}\xi
 \]
 where  differential operators $L_ag=a\cdot\nabla g$ act on a vector valued function $g$ component-wise. Expanding the time-$t$ map $\Phi^t_{Y_{m,\mu}}$
 in Taylor series in $t$ we get
 \begin{equation}\label{Eq:PhiY}
 \Phi^t_{Y_{m,\mu}}=\xi+t Y_{m,\mu}+\sum_{k=2}^m \frac{t^k}{k!}L_{Y_{m,\mu}}^k\xi+O((t\mu)^{m+1}).
\end{equation}
Our choice of $a_k$ implies that the terms of order  $\mu^k$ cancel each other for $k=2,\ldots,m$ when $t=1$:
\[
 \Phi^1_{Y_{m,\mu}}=\xi+ Y_{m,\mu}+\sum_{k=2}^m \frac{1}{k!}L_{Y_{m,\mu}}^k\xi+O(\mu^{m+1})
 =\xi+\mu a_1+ O(\mu^{m+1})=f_\mu+O(\mu^{m+1}).
 \]
Iterating the map we get that $\Phi^k_{Y_{m,\mu}}=f_\mu^k+O(\mu^{m+1})$.
Using the equation \eqref{Eq:interpol_VF} with $f$ replaced by $\Phi^1_{Y_{m,\mu}}$
we obtain a vector field $\hat X_{m,\mu}=X_{m,\mu}+O(\mu^{m+1})$. 
We note that  $\hat X_{m,\mu}$ is the derivative at $t=0$ of the Newton
 interpolating polynomial of degree $m$ defined by the points $\Phi^t_{Y_{m,\mu}}$ with $t=0,1,\ldots, m$. Since the interpolation is exact on polynomials of degree $m$,
 the equation \eqref{Eq:PhiY} implies that $\hat X_{m,\mu}=Y_{m,\mu}
 +O(\mu^{m+1})$. Combining these two estimates we get that $X_{m,\mu}=Y_{m,\mu}
 +O(\mu^{m+1})$, i.e.,  $Y_{m,\mu}$ is the Taylor
 polynomial of degree $m$ in $\mu$ for the vector field $X_{m,\mu}$.
 Since the time-one map of a vector field depends smoothly on the
 vector field we conclude that
 \[
 \Phi^1_{X_{m,\mu}}=\Phi^1_{Y_{m,\mu}}+O(\mu^{m+1})=f_\mu+O(\mu^{m+1}).
 \]
Therefore the Taylor expansion in $\mu$ of $\Phi^1_{X_{m,\mu}}$ matches the Taylor expansion of $f_\mu$ up to the order $m$. 

Since $\|\xi-f_\mu \|_{D_0} = |\mu|\, \| \xi - f \|_{D_0} \leq |\mu| \varepsilon$, 
we get that
$$
\|\Phi^1_{X_{m,\mu}}-f_\mu \|_{D_0}\le \|\Phi^1_{X_{m,\mu}}-\xi\|_{D_0}+\|\xi-f_\mu \|_{D_0} \le 
3\varepsilon|\mu|.
$$
The MMP  based on the bound in the disk 
$|\mu|\le \mu_m/4$ can be applied with $\mu=1$ to get the desired estimate
\[
\|\Phi^1_{X_{m}}-f \|_{D_0}  
\le   3\varepsilon\,\left( \frac{ 4}{\mu_{m}}\right)^m 
=
 3\varepsilon\,\left( 
 \frac{6\varepsilon (m-1)}{\delta}\right)^{m} 
.
\]
The right-hand side depends on $m$ and takes the smallest values 
 near  $ M_\varepsilon$. There is a unique integer $m\in[M_\varepsilon,M_\varepsilon+1)$.
Then, for this $m$,
\[
\frac{\mu_{m}}{4}=\frac{\delta}{6\varepsilon (m-1)}\ge
\frac{\delta}{6\varepsilon M_\varepsilon}= \mathrm e>1.
\]
In particular, it satisfies the assumption used in the proof, and
we can conclude that
$$
\|\Phi^1_{X_m}-f \|_{D_0} 
\le 3 \varepsilon \, \mathrm e^{-M_\varepsilon} 
= 3  \,
\varepsilon
\exp
\left(-
\frac{\delta}{6\mathrm e \varepsilon}
\right).
$$
Theorem is proved.
\end{proof}

\medskip

\begin{remark} 
For the sake of completeness  we present the bounds for the case of  $m=1$ separately. 
The interpolating vector field is given by $X_1(x)=f(x)-x$ and 
\[
\|\Phi^1_{X_{1}}-f \|_{D_0}  
\le   \frac{2\varepsilon^2 }{\delta}
.
\]
\begin{proof}
In order to check this bound we can consider $X_{1,\mu}=f_\mu -\xi=\mu(f-\xi)$. 
Obviously,  \[\|X_{1,\mu}\|_{D_{\delta}}=\|\mu(f_\mu-\xi)\|_{D_{\delta}}= |\mu|\varepsilon.\]
Then 
$
\|\Phi^1_{X_{1,\mu}}-f_\mu \|_{D_0}\le 2|\mu|\varepsilon
$
provided $|\mu|\varepsilon<\delta$. Since $\Phi^1_{X_{1,\mu}}-f_\mu=O(\mu^2)$, the MMP implies
the desired bound
\[
\|\Phi^1_{X_{1}}-f \|_{D_0}  
\le   \frac{  2\varepsilon\mu_0}{\mu_0^{2}}
=
 \frac{2\varepsilon^2 }{\delta}
\]
where $\mu_0=\delta/\varepsilon $.
\end{proof}
\end{remark}

The error bounds of  Theorem~\ref{Thm:alaNeishtadt} can be  improved by implementing a symmetric interpolation scheme instead of the Newton one, in a way similar to \cite{GelfreichV18}. We also note that in the case of a symplectic map $f$,  
the interpolating 
vector field \eqref{Eq:interpol_VF} is typically not Hamiltonian. 
On the other hand, it can be shown to be a small perturbation 
of a Hamiltonian vector field \cite{GelfreichV18,GelfreichViero2024},
with the size of the perturbation being comparable with the approximation errors
of Theorem~\ref{Thm:alaNeishtadt}.

Finally, we note that the discrete averaging can also be used
to study near-the-identity families of finitely smooth maps.
More precisely, let $f=f_\varepsilon$ be a member of a
$C^{m+1}$-smooth family of maps such that  $f_0=\mathrm{id}$. Then 
the equation \eqref{Eq:interpol_VF} 
provides an explicit expression for a vector field
such that the corresponding time-one flow 
approximates  $f_\varepsilon$ 
up to an error of the order of $O(\varepsilon^{m+1})$.

\section*{Acknowledgements}
A.V. is supported by the Spanish grant PID2021-125535NB-I00 funded by 
MICIU/AEI/10.13039/501100011033 and by ERDF/EU.
He also acknowledges
the Catalan grant 2021-SGR-01072 and the
Severo Ochoa and Mar\'{\i}a de Maeztu Program for Centers and Units of
Excellence in R\&D (CEX2020-0010 84-M).

\bibliographystyle{plain}


\begin{thebibliography}{1}

\bibitem{BM1961}
N.N.~Bogoliubov, Y.A.~Mitropolsky, 
{\em Asymptotic methods in the theory of non-linear oscillations}.
International Monographs on Advanced Mathematics and Physics, 537 pp, 1961. 


\bibitem{Verhulst2023} F.~Verhulst, {\em
A toolbox of averaging theorems—ordinary and partial differential equations.}
Surv. Tutor. Appl. Math. Sci., 12, 193 pp, 
Springer, 2023.


	\bibitem{Neishtadt84}
	A.I. Neishtadt.
	\newblock The separation of motions in systems with rapidly rotating phase.
	\newblock {\em J. Appl. Math. Mech.}, 48:133--139, 1984.
	
	
	\bibitem{GelfreichV18}
	V.~Gelfreich and A.~Vieiro.
	\newblock Interpolating vector fields for near identity maps and averaging.
	\newblock {\em Nonlinearity}, 31(9):4263--4289, 2018.
	
	\bibitem{GelfreichViero2024}
	V.~Gelfreich and A.~Vieiro.
	\newblock Nekhoroshev theory and discrete averaging.
	\newblock {\em Preprint}, 	arXiv:2411.02190 [math.DS], 36 pp, 2024.



\end{thebibliography}
\end{document}